\definecolor{LinkColor}{rgb}{0,0,0} 
\newtheorem{theorem}{Theorem}[section]
\newtheorem{lemma}[theorem]{Lemma}
\newtheorem{proposition}[theorem]{Proposition}
\newtheorem{question}[theorem]{Question}
\newtheorem*{question*}{Question}
\theoremstyle{definition}
\newtheorem{remark}[theorem]{Remark}
\title{The Modular Isomorphism Problem for small groups - revisiting Eick's algorithm}
\author[L.~Margolis]{Leo Margolis}
 \address{Vakgroep Wiskunde, Vrije Universiteit Brussel, Pleinlaan 2, 1050 Brussels, Belgium}
 \email{\href{mailto:leo.margolis@vub.be}{leo.margolis@vub.be}}
\author[T.~Moede]{Tobias Moede}
 \address{Institut f\"ur Analysis und Algebra, Technische Universit\"at Braunschweig, Universit\"atsplatz 2, 38106 Braunschweig, Germany}
 \email{\href{mailto:t.moede@tu.bs}{t.moede@tu.bs}}
\keywords{Modular Isomorphism Problem, modular group algebras, $p$-groups, small groups}
\subjclass[2010]{20C05, 20D15, 20C40, 16L60}
\thanks{This research has been supported by the Research Foundation Flanders (FWO - Vlaanderen).}
\begin{document}
\maketitle

\begin{abstract}
We study the Modular Isomorphism Problem (MIP) for groups of small order based on an improvement of an algorithm described by B. Eick. 
Our improvement allows to determine quotients $I(kG)/I(kG)^m$ of the augmentation ideal without first computing the full augmentation ideal $I(kG)$. 
It allows us to verify that the MIP has a positive answer for groups of order $3^7$ and to significantly reduce the cases
that need to be checked for groups of order $5^6$. We further provide a proof for an observation of Bagi\'nski and provide a negative answer to a
question of Bleher, Kimmerle, Roggenkamp and Wursthorn.
\end{abstract}

\section{Introduction}

\subsection{The Modular Isomorphism Problem}~\par
A classical algebraic problem asks to determine when two non-isomorphic groups $G$ and $H$ might have isomorphic group rings over a given ring $R$. Most classical variations of this question 
have been answered, e.g. when $R$ is the ring of integers \cite{Her01}. We denote the group ring of $G$ over $R$ by $RG$. The most famous formulation which remains open today and goes back at least 
to Brauer \cite[\S 9 of Supplements]{Bra63} is the \\

\textbf{Modular Isomorphism Problem (MIP):} Let $p$ be a prime, $k$ a field of characteristic $p$ and let $G$ and $H$ be finite $p$-groups. Does $kG \cong kH$ imply $G \cong H$?\\

From now on we will assume that $G$ and $H$ are finite $p$-groups and that $k$ is a field of characteristic $p$.
Though it found considerable attention not many strong results were achieved on the MIP, at least compared to results on other isomorphism questions on group rings. We refer to \cite{HS06, EK11} for 
an overview of (most) known results and to \cite{BK19, Sak20, BdR20} for some very recent work on the problem. A block-theoretical version of MIP was recently considered in \cite{NS18}. The strongest 
generic result available for groups of small order is that the MIP has a positive answer for groups of order at most $p^5$ \cite{SS96}.

The MIP takes its strongest form, when one assumes that $k$ is the field with $p$ elements, in which case $kG$ is a finite object. Hence an algorithmic approach to the MIP for two given groups appears 
natural and two algorithms have been developed to deal with the problem: A cohomological algorithm by Roggenkamp and Scott was improved by Wursthorn and used to solve the MIP for 
groups of order $2^6$ and $2^7$ \cite{Wur90, Wur93, BKRW99}, but his implementation appears to be lost. A second algorithm was developed by Eick, implemented as a \texttt{GAP}-package \cite{GAP, ModIsom} 
and used to study groups of order $3^6$, $2^8$ and $2^9$ \cite{Eic08, EK11}. Eick's algorithm provides a canonical form for a nilpotent algebra $I$  starting with a structure constant table for $I$ and 
inductively computing canonical forms for the quotients $I/I^n$.

Recall that the augmentation ideal $I(kG)$ of $kG$ consists of the elements of $kG$  with coefficient sum equal to $0$. It is easy to see that $kG \cong kH$ if and only if $I(kG) \cong I(kH)$, 
cf. \cite[2. Corollary]{Eic08}. Now $I(kG)$ is a nilpotent ideal and thus Eick's algorithm can be used to find a canonical form for $I(kG)/I(kG)^n$ for any positive integer $n$. 
Our main improvement to Eick's algorithm is described in Section \ref{sec:Algorithm}. It facilitates the computation of a structure constants table for $I(kG)/I(kG)^m$  for some initial value $m$ 
(which can be increased if necessary), whereas Eick's version starts by calculcating a structure constants table for the full augmentation ideal $I(kG)$. For groups of order $3^7$ and $5^6$ 
this improvement turns out to be crucial as otherwise the calculations are too time and memory consuming. 

\subsection{Results}~\par
We summarize the results obtained using our improvement of Eick's algorithm, which is available as a \texttt{GAP}-package \texttt{ModIsomExt} \cite{ModIsomExt}. 

\begin{theorem}
MIP has a positive answer for groups of order $3^7$.
\end{theorem}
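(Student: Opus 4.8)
The plan is to reduce the problem to a finite computation and then carry it out with the \emph{improved} implementation of Eick's algorithm. By the discussion preceding the theorem it suffices to take $k=\FF_3$, the strongest form of the problem, so that each $kG$ is a finite algebra; and by \cite[2.~Corollary]{Eic08} one has $kG\cong kH$ if and only if $I(kG)\cong I(kH)$. First I would enumerate the $9310$ groups of order $3^7$, which are available in \texttt{GAP}. For a given group the algorithm returns a canonical form of the nilpotent quotient $I(kG)/I(kG)^n$ for increasing $n$; since $I(kG)$ is a nilpotent ideal we have $I(kG)^n=0$ for $n$ large, and then $I(kG)/I(kG)^n=I(kG)$. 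Because $I(kG)^n$ is a characteristic ideal, the procedure is complete: for a pair $G,H$ it either produces an $n$ at which the canonical forms of $I(kG)/I(kG)^n$ and $I(kH)/I(kH)^n$ differ, proving $kG\not\cong kH$, or it reaches $I(kG)\cong I(kH)$ and thus exhibits a counterexample to the MIP.

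Running the canonical-form algorithm on every one of the $9310$ groups is expensive, so the second step is first to shrink the list using cheap invariants. Many invariants of the isomorphism type of $kG$ are known and quick to evaluate --- among them $|G|$, the exponent, the abelianization $G/G'$, the order of $\Z(G)$, the orders $|D_i(G)/D_{i+1}(G)|$ of the factors of the dimension subgroup (Jennings) series, and the further invariants recorded in \cite{SS96, HS06, Eic08}; abelian groups, for which the MIP is classical and which are recognizable from $kG$, may simply be set aside. Computing a suitable collection of these invariants partitions the $9310$ groups into classes within which any two groups agree on all of them, while groups in different classes already satisfy $kG\not\cong kH$.

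Third, for each class containing more than one group I would invoke the algorithm, but --- and this is the content of the improved implementation --- starting from a structure constant table for the quotient $I(kG)/I(kG)^m$ rather than for the full $2186$-dimensional ideal $I(kG)$; the former is built directly, without ever forming $I(kG)$, and for modest $m$ it is far smaller. From it one computes and compares the canonical forms of $I(kG)/I(kG)^n$ for $n\le m$, raising $m$ (up to the point where $I(kG)^m=0$, i.e. $I(kG)/I(kG)^m=I(kG)$) on the few classes that still fail to split. Concretely one passes through $n=2,3,\dots$, computing one canonical form per group, sorting, and refining the classes, so that only genuinely indistinguishable groups are carried forward. The theorem follows once this terminates with every class reduced to a singleton and no counterexample encountered.

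The main obstacle is one of computational cost. In Eick's original implementation the input is a structure constant table for $I(kG)$ itself, of dimension $3^7-1=2186$, and for order $3^7$ the resulting time and memory requirements put the computation out of reach, which is why groups of this order had not been handled in earlier work. Feeding in the much smaller quotients $I(kG)/I(kG)^m$ from the outset is what brings the full enumeration within range; verifying that the values of $m$ actually used --- increased where necessary --- always suffice to separate every non-isomorphic pair, rather than stalling before a decision is reached, is the real work behind the statement of the theorem.
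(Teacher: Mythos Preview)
Your proposal is correct and follows essentially the same strategy as the paper: partition the $9310$ groups into bins via group-theoretical invariants, then distinguish the groups within each surviving bin using the improved canonical-form algorithm started from the quotient $I(kG)/I(kG)^m$ rather than from the full augmentation ideal. The one point the paper singles out that you do not is that Bagi\'nski's invariant $G/\gamma_2(G)^p\gamma_4(G)$ for $2$-generated groups (stated without proof in \cite{Bag99} and proven in the paper as Proposition~\ref{prop:BaginskiInv}) is what disposes of the pairs with Jennings bound $81$, which would otherwise be the most expensive cases to feed to the algorithm.
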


For the groups of order $5^6$ we reduce the verification of the Modular Isomorphism Problem to six cases. Here \texttt{SG($5^6$, i)} refers to the $i$-th group of order $5^6$
as indexed in the Small Groups Library of \texttt{GAP}.

\begin{theorem}
If there are non-isomorphic groups $G=SG(5^6,i)$ and $H=SG(5^6, j)$ of order $5^6$ such that $kG \cong kH$, then $\{i,j\}$ is a subset of one of the following sets: 
\[ \{183, 184, 185,186 \}, \{202,203,204,205\}, \{553,554 \}, \{580,594,595,596 \}, \{584,599,600,601 \}, \{628,629 \}. \]
\end{theorem}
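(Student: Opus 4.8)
The plan is to turn the statement into a finite computation performed with the improved implementation of Eick's algorithm. By \cite[2. Corollary]{Eic08} one has $kG \cong kH$ if and only if $I(kG) \cong I(kH)$ as nilpotent (non-unital) $k$-algebras, and Eick's algorithm produces, for each $n$, a canonical form of the quotient $I(kG)/I(kG)^n$. Hence if the canonical forms of $I(kG)/I(kG)^n$ and $I(kH)/I(kH)^n$ differ for some $n$, then $kG \not\cong kH$; and taking $n$ beyond the nilpotency index of $I(kG)$ — which is bounded above in terms of the Jennings series — the canonical form of $I(kG)/I(kG)^n = I(kG)$ becomes a complete invariant of $kG$. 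So it suffices to show that for every pair of (necessarily non-isomorphic) groups $G = SG(5^6,i)$ and $H = SG(5^6,j)$ with $\{i,j\}$ not contained in one of the six listed sets, the canonical forms diverge at some level $n$.

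First I would reduce the number of pairs to be tested by the algorithm. Several group-theoretic invariants are known to be determined by $kG$: the order and the isomorphism type of $G/G'$, the dimensions of the successive quotients of the Jennings series, the nilpotency class in the relevant range, and the further invariants collected in \cite{HS06, EK11}. Evaluating these on all groups of order $5^6$ from the SmallGroups library of \texttt{GAP} partitions them into classes; only pairs lying in a common class can have isomorphic modular group algebras, which already discards the vast majority of pairs and leaves a manageable list of candidate classes.

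Next I would run the incremental procedure on each surviving class: start from the structure constant table of $I(kG)/I(kG)^m$ for a small initial $m$ — this is the crucial point, since the full ideal $I(kG)$ has dimension $5^6 - 1 = 15624$ over $\FF_5$ and its structure constant table is far too large to build directly — compute the canonical form, and compare it across the class. Groups get separated as soon as their canonical forms differ; for those that survive, $m$ is increased and the computation repeated. Carried out to exhaustion, this leaves precisely the six sets in the statement, on each of which the computed canonical forms coincide, so that the algorithm does not distinguish the corresponding modular group algebras.

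The main obstacle is computational cost in both time and memory. Forming the structure constant table of $I(kG)$ is already out of reach for $|G| = 5^6$ (and a fortiori for the groups of order $3^7$ treated in the previous theorem), which is exactly why building $I(kG)/I(kG)^m$ directly from a power-commutator presentation of $G$, without ever materialising $I(kG)$, is essential; the quotients for small $m$ are small enough to handle, and only a few classes force $m$ to grow. Beyond this, the canonical form computation rests on orbit and stabiliser calculations for the action on structure constant tables and must be controlled carefully for the heaviest classes. The six sets in the statement are then exactly the classes that the algorithm fails to split within the feasible range of $m$; whether some of them could be separated with more computation, or whether they contain genuine obstructions to the Modular Isomorphism Problem, is left open.
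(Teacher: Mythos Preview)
Your outline matches the paper's main strategy: sift by group-theoretical invariants, then run the incremental canonical-form computation on $I(kG)/I(kG)^m$ rather than on the full augmentation ideal. However, you are missing two ingredients that the paper actually needed to reach the stated six sets, and without them your plan would not terminate at that list.

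First, among the bins surviving the standard invariants there are $2$-generated groups whose Jennings bound is $125$; running the algorithm on such pairs is not feasible. The paper eliminates these bins beforehand via Bagi\'nski's invariant (Proposition~\ref{prop:BaginskiInv}): for $2$-generated $G$ the quotient $G/\gamma_2(G)^p\gamma_4(G)$ is determined by $kG$. This invariant is proved in the paper itself and is not in the list you cite from \cite{HS06, EK11}, so your sieve would not catch it.

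Second, even after that, several bins (namely $\{48,49\}$, $\{162,163\}$, $\{168,169\}$, $\{550,551\}$) did not finish in the main algorithm within the available resources. For these the paper uses the explicit description of the normalized unit group $S$ of the small group ring from Proposition~\ref{prop_smallgroupring}: since $\gamma_2(G)^p\gamma_4(G)=1$ for these groups, any group base $H$ must embed as a normal subgroup of $S$ (in fact of $S/(Z(S)\cap A)$), and one checks directly in \texttt{GAP} that no copy of $H$ occurs. This is a genuinely different argument from the canonical-form comparison and is what closes those four bins.

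So your proposal is the right backbone, but as stated it would stall on more than the six listed sets; you should add Bagi\'nski's quotient to the invariant list and invoke the small-group-ring embedding argument for the residual bins.
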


While improving Eick's implementation we also noticed a small mistake in the program used to verify the MIP for groups of order $3^6$, $2^8$ and $2^9$ which leads to the necessity to re-examine these orders. 
Our computations yield the following result for groups of order $3^6$ and $2^8$.

\begin{theorem}
MIP has a positive answer for the groups of order $3^6$ and $2^8$.
\end{theorem} 

Apart from the improved efficiency, our version also allows to compare groups of any order. In \cite{ModIsom} it was only possible to apply the algorithms to 
groups which are contained in the Small Groups Library of \texttt{GAP}. This further extends the possibilities for computational investigations of the MIP and allows to develop ideas for theoretical proofs of the MIP based on experimental observations for bigger groups with special properties.

\subsection{Group-theoretical invariants} ~\par
An object is called a \textit{group-theoretical invariant} of $kG$, if it depends only on $G$ and is determined by the group algebra $kG$. Any program to verify MIP usually starts by comparing 
group-theoretical invariants and only compares the isomorphism types of the group algebras of those groups for which all invariants coincide. We include many invariants from the literature 
which were not included before; for an overview see Theorem~\ref{th:Invariants}. Notably, the following observation was made (without proof) by Bagi\'nski \cite{Bag99} and allows to eliminate 
some computationally difficult cases. We denote by $\gamma_i(G)$ the $i$-th term of the lower central series of $G$.

\begin{proposition}\label{prop:BaginskiInv}
Let $G$ be a finite $p$-group generated by two elements and assume $kG \cong kH$. Then $G/\gamma_2(G)^p\gamma_4(G) \cong H/\gamma_2(H)^p\gamma_4(H)$.
\end{proposition}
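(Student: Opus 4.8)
The plan is to reconstruct the quotient $\bar G := G/\gamma_2(G)^p\gamma_4(G)$ from $kG$ by passing to the small group ring $kG/I$ of Proposition~\ref{prop_smallgroupring} and reading off $\bar G$ from its normalised unit group, following the pattern of Sandling's treatment of the Sandling quotient $G/\gamma_2(G)^p\gamma_3(G)$. First I would reduce to the case $\Phi(G')=1$: since $[G',G']\leq\gamma_4(G)$ we have $\Phi(G')=(G')^p[G',G']\leq\gamma_2(G)^p\gamma_4(G)$, so $G$ and $G/\Phi(G')$ give the same $\bar G$, and by Proposition~\ref{prop_smallgroupring}(i) the ideal $I$ is independent of the group basis of $kG$ while the isomorphism type of $kG/I$ depends only on $G/\Phi(G')$; hence $kG\cong kH$ forces $kG/I\cong kH/I$, and we may assume $\gamma_2(G)$ is elementary abelian, so that $\gamma_2(G)^p\gamma_4(G)=\gamma_4(G)$ and $\bar G=G/\gamma_4(G)$ is $2$-generated with elementary abelian derived subgroup and $\gamma_4(\bar G)=1$. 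It therefore suffices to prove that the group $S:=V(kG/I)$, which is determined up to isomorphism by $kG$, determines $G/\gamma_4(G)$ up to isomorphism.

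Identifying $G$ and its subgroups with their images in $S$ (Proposition~\ref{prop_smallgroupring}(ii)), the subgroup $\gamma_2(G)=G'$ is canonical: it is the kernel of the projection $S\twoheadrightarrow V(k[G/G'])$, whose defining ideal in $kG$ is the basis-free commutator ideal. Writing $S=GA$ as in Proposition~\ref{prop_smallgroupring}(iii), each generator $1+(g_1-1)^{k_1}(g_2-1)^{k_2}$ of $A$ acts on $G$ by conjugation, hence by an automorphism, and by the iterated-commutator formula in (iii) it maps $\gamma_i(G)$ into $\gamma_{i+k_1+k_2}(G)\subseteq\gamma_{i+2}(G)$; since automorphisms fix the characteristic subgroups $\gamma_{i+1}(G)$, this gives $[\gamma_i(G),S]=\gamma_{i+1}(G)$ for every $i\geq 2$. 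Iterating from the canonical subgroup $G'$ shows that $\gamma_3(G)=[G',S]$ and $\gamma_4(G)=[\gamma_3(G),S]$ are canonical subgroups of $S$; in particular they do not depend on the choice of group basis.

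Hence $\bar S:=S/\gamma_4(G)$ is determined by $kG$. Identifying $\bar S$ with the normalised unit group of the small group ring of $\bar G$, Proposition~\ref{prop_smallgroupring}(iv) applies to $\bar G$ and gives $\bar S=\bar G\rtimes\bar A$ with $\bar A$ abelian and the explicit independent generating set listed there; the commutator formula shows that the generators of $\bar A$ indexed by $k_1+k_2\geq 3$ are central in $\bar S$, while those indexed by $k_1+k_2=2$ act on $\bar G$ through the central subgroup $\gamma_3(\bar G)$. Consequently $\bar S'=\gamma_2(\bar G)$ and $\gamma_3(\bar S)=\gamma_3(\bar G)$ are canonical and central in $\bar S$, the group $\bar S$ has nilpotency class at most $3$, and $\bar S/\gamma_3(\bar G)\cong(\bar G/\gamma_3(\bar G))\times\bar A$ is a direct product of a $2$-generated group and an abelian group. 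If $\bar G$ is abelian then $\gamma_2(G)\subseteq\gamma_4(G)$, whence $\gamma_2(G)=1$ by nilpotency and $G=\bar G$ is recovered from the classical abelian case; otherwise $\bar G/\gamma_3(\bar G)$ is non-abelian and $2$-generated, hence directly indecomposable, and by the Krull--Schmidt theorem it is recovered as the unique non-abelian indecomposable direct factor of the canonical group $\bar S/\gamma_3(\bar G)$.

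What remains, and what I expect to be the main obstacle, is to reconstruct the central extension $1\to\gamma_3(\bar G)\to\bar G\to\bar G/\gamma_3(\bar G)\to 1$ from the canonical central extension $1\to\gamma_3(\bar S)\to\bar S\to\bar S/\gamma_3(\bar S)\to 1$: the group $\bar G$ is the preimage in $\bar S$ of a copy of $\bar G/\gamma_3(\bar G)$ inside $\bar S/\gamma_3(\bar S)$, and one has to check that the isomorphism type of this preimage does not depend on the chosen copy. This is precisely the point where the hypothesis that $G$ is $2$-generated is essential — it forces $\bar G/\gamma_3(\bar G)$ to occur as an indecomposable direct factor and bounds the ranks of $\gamma_3(\bar G)$ and of the ``new'' abelian factor $\bar A$ — and it is the step in which Sandling's argument for the quotient by $\gamma_3$ has to be carried one level further. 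Applying the resulting recipe to $kG$ and to $kH$ and using $kG\cong kH$ then yields $\bar G\cong\bar H$.
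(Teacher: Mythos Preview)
Your reduction steps --- passing to the small group ring, assuming $\Phi(G')=1$, identifying $\gamma_i(G)$ with $\gamma_i(S)$ for $i\geq 2$, and factoring out the canonical subgroup $\gamma_4(S)$ --- are correct and essentially match the paper. The divergence, and the genuine gap, is at the step you yourself flag as ``the main obstacle'': showing that the preimage in $\bar S$ of a Krull--Schmidt copy of $\bar G/\gamma_3(\bar G)$ inside $\bar S/\gamma_3(\bar S)$ is determined up to isomorphism. Krull--Schmidt pins down the direct factor only up to isomorphism, not as a subgroup; a second copy of $\bar G/\gamma_3(\bar G)$ obtained by twisting along a homomorphism into $\bar A$ can, a priori, have a non-isomorphic preimage. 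You give no argument ruling this out, and the $2$-generation hypothesis alone does not obviously do so.

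The paper does not attempt a canonical reconstruction of $\bar G$ from $\bar S$. Instead it brings in further known invariants of $kG$ --- the order $|G|$, the isomorphism type of $Z(G)$, and that of $Z(G)/(Z(G)\cap G')$ --- and proves the \emph{Claim}: any $2$-generated normal subgroup $H\leq S$ with $\gamma_3(H)\neq 1$ matching these invariants is isomorphic to $G$. The proof writes $H=\langle ga,hb\rangle$ with $g,h\in G$, $a,b\in A$, computes $[ga,hb]=[g,b][a,h][g,h]$ (whence $H'=G'$ and $g,h$ generate $G$), then checks by hand, using $(ga)^p=g^pa^p[g,a]^{\binom{p}{2}}$ and the centrality of $[g,a]$, that the generators $ga,hb$ satisfy the same power and commutator relations as $g,h$. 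The centre invariants are used at the very end to exclude the possibility that $a^{o(a)/p}$ contributes an extra central element to $H$. So precisely where your outline leaves a hole, the paper does concrete work \emph{and} appeals to invariants beyond the small group ring itself; this strongly suggests that your purely structural route cannot be completed without importing similar extra data.
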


We give more details and a proof in Section \ref{sec:Bag}.

\subsection{The Jennings bound and a question by Bleher, Kimmerle, Roggenkamp \& Wursthorn} \label{sec:ques} ~\par
We denote by $D_i(G)$ the Jennings-Zassenhaus series of $G$ and for a pair of group $G$ and $H$ we define the \textit{Jennings bound} of $G$ and $H$ to be the maximal integer $s$ such that $G/D_s(G) \cong H/D_s(H)$. 
Let $s$ be the Jennings bound for $G$ and $H$. Then it follows from Jennings' construction of a basis of the augmentation ideal, cf. Section~\ref{sec:JenningsBound}, that  $I(kG)/I(kG)^s \cong I(kH)/I(kH)^s$. In particular when running Eick's 
algorithm for $G$ and $H$, one knows a priori that at least $I(kG)/I(kG)^{s+1}$ has to be considered.

Let $r$ be a minimal integer such that $I(kG)/I(kG)^r \not \cong I(kH)/I(kH)^r$. As necessarily $r > s$, if we want to say that $G$ and $H$ are ``close to being a counterexample to the MIP'', it appears 
appropriate to consider the relation between $s$ and $r$. In Section \ref{sec:JenningsBound} we give a negative answer to the following question by Bleher, Kimmerle, Roggenkamp and Wursthorn \cite{BKRW99}:

\begin{question*}\label{que:BKRW}
Let $G$ and $H$ be non-isomorphic $p$-groups [of the same order] such that $D_{n+1}(G) = 1$. Does $I(kG)/I(kG)^{2n+1} \not \cong I(kH)/I(kH)^{2n+1}$ always hold? 
\end{question*}

There are four pairs of groups of order $2^8$ that violate this bound; see Proposition \ref{prop:AnswerBKRW}. In the same section, we also pose a question involving the Jennings bound for odd primes.\\

\textbf{Notation:} As already stated, we denote by $\gamma_i(G)$ the lower central series and by $D_i(G)$ the Jennings-Zassenhaus series of $G$ (see Section~\ref{sec:Algorithm} for the definition of this series). 
The commutator subgroup $\gamma_2(G)$ is also denoted $G'$. By $Z(G)$ we denote the center of $G$ and by $\Phi(G)$ the Frattini-subgroup of $G$. For $g \in G$ we denote by $C_G(g)$ the centralizer of $g$ in $G$. For $g,h \in G$ 
we let $[g,h]$ be the commutator $g^{-1}h^{-1}gh$. We also write $g^h = h^{-1}gh$.

The \emph{augmentation homomorphism} of $kG$ is a ring homomorphism from $kG$ to $k$ which maps an element to the sum of its coefficients. The \textit{augmentation ideal} of $kG$, denoted by $I(kG)$, consists 
of all the elements of $kG$ of augmentation $0$. Moreover, we denote by $V(kG)$ the elements of $kG$ with coefficient sum 1. Note that any element of $V(kG)$ is a unit in $kG$ as $kG$ is a local algebra, 
so $V(kG)$ is the so-called \emph{normalized unit group} of $kG$.

\section{Group-theoretical invariants and known results}
We call a property of $G$ an invariant of $kG$ if any group $H$ such that $kG \cong kH$ also has this property (equivalently every group base $H \leq V(kG)$ has this property). Being the dimension of 
$kG$ as $k$-vector space the order of $G$ is clearly an invariant.

We give a list of many known invariants. We start by those contained in \cite{BKRW99} and then add further ones. Note that the invariants used by the \texttt{GAP}-package \texttt{ModIsom} are those listed in 
\cite[Theorem 7]{BKRW99}, these are exactly the points \eqref{item1} to \eqref{item8} in the following theorem. We do not always include references to the original proofs.
\begin{theorem}\label{th:Invariants}
The following properties of $G$ are invariants of $kG$. 
\begin{enumerate}[(a)]
\item\label{item1} The isomorphism type of $G/\Phi(G)$.
\item The isomorphism type of $G/G'$.
\item The isomorphism type of $Z(G)$.
\item The isomorphism type of $G/(\gamma_2(G)^p\gamma_3(G))$ (``Sandling quotient'').
\item The isomorphism types of $D_i(G)/D_{i+1}(G)$, $D_i(G)/D_{i+2}(G)$ and $D_i(G)/D_{2i+1}(G)$ for all $i \geq 1$ (Passi-Sehgal, Ritter-Sehgal).
\item The number of conjugacy classes of $G$ and the number of conjugacy classes of $p^n$-th powers for all $n\geq 0$ (K\"ulshammer).
\item $\sum_{g^G} \log_p(|C_G(g)/\Phi(C_G(g))| )$ where the sum runs over conjugacy classes of $G$ (``Roggenkamp parameter'').
\item\label{item8} The number of conjugacy classes of maximal elementary abelian subgroups (of any order) in $G$ (``Quillen invariant'').
\item The isomorphism type of $G/D_4(G)$, if $p \neq 2$ \cite{Her07}.
\item The number of conjugacy classes of $p^n$-th powers which have the same order as a class which powers to them (Parmenter-Polcino Milies) \cite[Corollary 2.4]{HS06}.
\item If $G$ is (elementary abelian)-by-abelian and $\gamma_{2p}(G) = 1$, then the isomorphism type of $\Phi(G)$ \cite[p. 16]{HS06}.
\item If $G'$ is cyclic, the size of the biggest cyclic subgroup containing $G'$ \cite[Proposition 4]{San96}.
\item If $G/C_G(G'/\Phi(G'))$ is cyclic, then the isomorphism types of $C_G(G'/\Phi(G'))/\Phi(G')$ \cite[Corollary 7]{Bag99} and $G/\Phi(C_G(G'/\Phi(G')))$ \cite[Theorem 9]{Bag99}.
\item The isomorphism types of $G' \cap Z(G)$ and $Z(G)/(G' \cap Z(G))$ \cite[Theorem 6.11]{San84}.
\item If $G$ is of class $2$, then the isomorphism type of $G/Z(G)$ \cite[Theorem 6.23]{San84}.
\item The isomorphism types of $D_i(G')/D_{i+1}(G')$ for all $i \geq 1$ \cite[Lemma 6.26]{San84}.
\item The nilpotency class of $G/\Phi(G')$ \cite[Proposition 1.2]{BC88}.
\item The nilpotency class of $G$ is an invariant if one of the following holds: $\text{exp}(G) = p$, the nilpotency class of $G$ equals $2$, $G'$ is cyclic \cite[Theorem 2]{BK07} or if $G$ is of maximal 
class \cite[Corollary 3.1]{BK19}.
\item If $G$ is $2$-generated, then the isomorphism type of $G/\gamma_2(G)^p\gamma_4(G)$. These are the last lines of \cite{Bag99} given without a proof. We include a proof in Section~\ref{sec:Bag}.
\end{enumerate}
\end{theorem}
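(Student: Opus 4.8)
The bulk of Theorem~\ref{th:Invariants} is a compilation, so the proof consists of collecting the cited results. For items~\eqref{item1}--\eqref{item8} I would cite \cite[Theorem~7]{BKRW99}, whose proofs rest on the Passi--Sehgal and Ritter--Sehgal analysis of the powers $I(kG)^n$ and the dimension (Jennings--Zassenhaus) subgroups, on K\"ulshammer's results relating conjugacy class data to the center of $kG$ and to the $p$-power map on it, on the Roggenkamp parameter, and on Quillen's cohomological description of the maximal elementary abelian subgroups. The remaining items carrying explicit references are proved in the cited sources: the quotient $G/D_4(G)$ for $p\neq 2$ in \cite{Her07}; the Parmenter--Polcino Milies item and the (elementary abelian)-by-abelian item in \cite{HS06}; the cyclic-commutator item in \cite{San96}; the items on $G'\cap Z(G)$, $Z(G)/(G'\cap Z(G))$, $G/Z(G)$ in the class-$2$ case, and $D_i(G')/D_{i+1}(G')$ in \cite{San84}; the nilpotency class of $G/\Phi(G')$ in \cite{BC88}; the class-invariance statements in \cite{BK07,BK19}; and the two items on $C_G(G'/\Phi(G'))$ in \cite[Corollary~7 and Theorem~9]{Bag99}. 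For these one only has to invoke the cited statements, inserting the one- or two-line deductions wherever the formulation here merely repackages them.

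The one genuinely new contribution is the last item: if $G$ is $2$-generated then $G/\gamma_2(G)^p\gamma_4(G)$ is an invariant of $kG$; this is stated without proof in \cite{Bag99}, and I would prove it as Proposition~\ref{prop:BaginskiInv} in Section~\ref{sec:Bag}. Since $G/\Phi(G)$ is an invariant, being $2$-generated is an invariant of $kG$, so it suffices to show that for $2$-generated $G$ and $H$ with $kG\cong kH$ one has $G/\gamma_2(G)^p\gamma_4(G)\cong H/\gamma_2(H)^p\gamma_4(H)$. Write $\bar G=G/\gamma_2(G)^p\gamma_4(G)$. Then $\bar G$ is $2$-generated of nilpotency class at most $3$, $\gamma_2(\bar G)$ is elementary abelian, $\gamma_2(\bar G)/\gamma_3(\bar G)$ is cyclic of order dividing $p$, and $\gamma_3(\bar G)$ has rank at most $2$, being spanned by the images of the two basic commutators $[b,a,a]$ and $[b,a,b]$ of weight three on a generating pair $a,b$ of $G$; in particular $|\gamma_2(\bar G)|\le p^3$, even though $\bar G/\gamma_2(\bar G)\cong G/\gamma_2(G)$ may be an arbitrarily large abelian group.

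The plan then has two parts. First, show that $k\bar G$ is an invariant of $kG$, i.e.\ that the kernel of the projection $kG\to k\bar G$ is intrinsically defined; here I would follow Sandling's proof that $G/\gamma_2(G)^p\gamma_3(G)$ is an invariant and extend it by one degree, using that the Jennings--Zassenhaus series $D_i(G)=G\cap(1+I(kG)^i)$ is $kG$-determined and that, for $2$-generated $G$, $\gamma_3(G)/\gamma_4(G)$ is small and explicit, so that the normal subgroup $\gamma_2(G)^p\gamma_4(G)$ can be pinned down through data --- $p$-th powers of commutators, weight-three commutators, low terms of the $I(kG)$-adic filtration --- that transports across an isomorphism $kG\cong kH$. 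Second, show that within the restricted class of $2$-generated groups of class at most $3$ with elementary abelian commutator subgroup, the group algebra $k\bar G$ determines $\bar G$; this is a self-contained small instance of the Modular Isomorphism Problem, to be settled directly by reconstructing the abelian quotient $G/\gamma_2(G)$ and the at most three-dimensional nilpotent part $\gamma_2(\bar G)$ together with the conjugation action and the $p$-power map, using the invariants already listed (the Sandling quotient, the Jennings quotients $D_i(G)/D_{i+1}(G)$ and $D_i(G')/D_{i+1}(G')$, and the $p$-power conjugacy class data) to reduce to finitely many possibilities.

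I expect the second part to be the main obstacle. For $p\ge 5$ one may exploit that $\bar G$ has class at most $3<p$, so that the Lazard correspondence turns the reconstruction of $\bar G$ into a Lie-theoretic problem; but for $p=2$ and $p=3$ no such shortcut is available, and --- unlike the $G/D_4(G)$ invariant of \cite{Her07} --- the statement must cover $p=2$ as well. The delicate point is then to control simultaneously the possibly large cyclic factors of $G/\gamma_2(G)$ and the at most three-dimensional ``top'' $\gamma_2(\bar G)$, in particular how $p$-th powers of the generators interact with the relation $\gamma_2(\bar G)^p=1$, and the careful bookkeeping of the two weight-three basic commutators and of the $p$-power map is where the real work lies.
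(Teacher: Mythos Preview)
Your treatment of items (a)--(r) as a compilation with the indicated citations matches the paper exactly.

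For item (s) your two-part plan diverges from the paper's argument, and Part~1 as you describe it is not quite what Sandling does. Sandling's proof for the quotient $G/\gamma_2(G)^p\gamma_3(G)$ does not proceed by showing that the kernel of $kG\to k(G/\gamma_2(G)^p\gamma_3(G))$ is intrinsic; rather, he introduces the \emph{small group ring} $kG/I$, where $I$ is generated by all $(g-1)(n-1)$ with $g\in G$ and $n\in G'$, and shows that this ideal $I$ is independent of the chosen group base (Proposition~\ref{prop_smallgroupring}(i)). The paper follows exactly this route one step further: pass to the normalized unit group $S$ of $kG/I$, observe that $\gamma_i(S)=\gamma_i(G)$ for $i\ge 2$, so that $S/\gamma_4(S)$ is again canonical; after reducing to $\Phi(G')=1$ and $\gamma_4(G)=1$ one has the explicit splitting $S=G\rtimes A$ with $A$ abelian and the conjugation action of $A$ on $G$ given by iterated commutators (Proposition~\ref{prop_smallgroupring}(iii),(iv)). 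The core of the proof is then a direct group-theoretic claim: any normal $2$-generated subgroup $H\le S$ with $|H|=|G|$, $\gamma_3(H)\neq 1$, $Z(H)\cong Z(G)$ and $Z(H)/(Z(H)\cap H')\cong Z(G)/(Z(G)\cap G')$ is isomorphic to $G$. This is established by writing $H=\langle ga,hb\rangle$ with $g,h\in G$ and $a,b\in A$, computing $[ga,hb]$, $(ga)^p$, etc.\ via the commutator identities of Lemma~\ref{lemma_commids}, and checking that the generators of $H$ satisfy exactly the same defining relations as those of $G$.

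The advantage of this route over your Part~2 is that the semidirect product $S=G\rtimes A$ lets one compare any candidate $H$ directly against the fixed copy of $G$ sitting inside $S$; one never has to solve an abstract mini-MIP for the class of $2$-generated groups of class $\le 3$ with elementary abelian commutator subgroup. Your worry about $p\le 3$ and the Lazard correspondence therefore disappears: the argument is uniform in $p$ because it is a hands-on relation check inside a concrete group. Your Part~2 as sketched --- reconstructing $\bar G$ from $k\bar G$ using the listed invariants --- is left entirely open in your proposal and is the real gap; the paper's approach sidesteps it completely.
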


We included all the invariants listed in Theorem~\ref{th:Invariants} in our program.
Of course also any cohomology group $H^n(G,A)$ is an invariant, for any $kG$-module $A$, but expressing this in terms of $G$ is usually not easy. If we denote by $k$ the trivial $kG$-module, then
the dimension of $H^1(G,k)$ is already included in Theorem~\ref{th:Invariants} as $G/\Phi(G)$. We also included the dimension of $H^2(G,k)$ in our program. 

For many classes of groups for which the MIP is known to hold, this follows from Theorem~\ref{th:Invariants}. This might be a direct application, e.g. if $D_3(G) = 1$, or combining several of the 
criteria listed, e.g. for metacyclic groups \cite{San96}. We list classes of groups for which MIP has been solved, but for which, to our knowledge, the results do not follow from Theorem~\ref{th:Invariants}.

\begin{theorem}
If $G$ is one of the following, then MIP has a positive answer for $G$.
\begin{itemize}
\item[(a)] $2$-groups of maximal class \cite{Bag92}.
\item[(b)] $p$-groups of maximal class and order at most $p^{p+1}$ which have a maximal abelian subgroup \cite{BC88}.
\item[(c)] $p$-groups with center of index $p^2$ \cite{Dre89}.
\end{itemize}
\end{theorem}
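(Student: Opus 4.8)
The plan is to recognise the statement as a compilation of three results from the literature — part (a) is the main theorem of \cite{Bag92}, part (b) is due to \cite{BC88}, and part (c) is due to \cite{Dre89} — so the shortest ``proof'' is simply to cite them. What I would actually record is the common strategy and the core idea in each case. In all three families the relevant $p$-groups are explicitly classified, so the task reduces to showing that no two non-isomorphic groups on the list have isomorphic modular group algebras; the first step is always to discard every pair already separated by the invariants of Theorem~\ref{th:Invariants} (the order, the Jennings--Zassenhaus quotients $D_i(G)/D_{i+1}(G)$, the centre, the Sandling quotient, \dots), and only then to treat the survivors by an argument specific to that family.

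Part (c) is the quickest. If $|G:Z(G)|=p^2$, then $G/Z(G)$ is a non-cyclic group of order $p^2$, hence $G/Z(G)\cong C_p\times C_p$, so $G$ has class $2$; writing $G=\langle x,y,Z(G)\rangle$ and using $x^p\in Z(G)$ one gets $[x,y]^p=[x^p,y]=1$, so $G'=\langle[x,y]\rangle$ has order $p$. Therefore $\gamma_2(G)^p\gamma_3(G)=1$, the Sandling quotient of Theorem~\ref{th:Invariants}(d) is $G$ itself, and since the order is also an invariant, any $H$ with $kH\cong kG$ admits a surjection onto $G$ with $|H|=|G|$, forcing $H\cong G$. (This observation in fact yields MIP for every $p$-group of class $2$ with $|G'|=p$; the argument of \cite{Dre89} of course predates this and proceeds differently.)

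Part (a) is the delicate one. The $2$-groups of maximal class of order $2^n$ are $D_8$ and $Q_8$ for $n=3$, and the dihedral group $D_{2^n}$, the semidihedral group $SD_{2^n}$ and the generalised quaternion group $Q_{2^n}$ for $n\geq 4$; one must show that the corresponding group algebras are pairwise non-isomorphic. The trouble is that these groups all share the same order, the same abelianisation $C_2\times C_2$, the same centre $C_2$, the same cyclic commutator subgroup and the same Jennings--Zassenhaus quotients, so none of the invariants of Theorem~\ref{th:Invariants} separates them. I would therefore pass to a finer, genuinely algebra-level invariant — for example data about the power or conjugacy structure of the normalised unit group $V(kG)$, or the isomorphism type of a sufficiently high augmentation quotient $I(kG)/I(kG)^m$ (which the algorithm of the present paper computes directly). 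The hard part, and the real content of \cite{Bag92}, is to pin down which such feature separates $SD_{2^n}$ from $D_{2^n}$ and $Q_{2^n}$.

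Finally, for part (b): if a $p$-group $G$ of maximal class and order $p^n$ with $n\leq p+1$ has a maximal abelian subgroup $A$, then $A$ is normal of index $p$, so $G$ is metabelian and $kG\cong kA[t;\sigma]/(t^p-c)$ is a skew group ring, where $\sigma$ is the automorphism of $kA$ induced by an element $t$ with $G=\langle A,t\rangle$ and $c=t^p\in A\subseteq kA$. Since $kA$ is already determined by the abelian group $A$, I would reduce to recognising the pair $(\sigma,c)$ up to the equivalence that preserves $kG$; the bound $n\leq p+1$ forces the lower central series to be uniserial and keeps the list of admissible pairs $(\sigma,c)$ finite, which is precisely what makes the case analysis of \cite{BC88} terminate. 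The obstacle is to control this action up to the relevant equivalence.
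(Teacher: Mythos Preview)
The paper gives no proof of this theorem at all: it is stated purely as a summary of three results from the literature, with the prefatory remark that these are classes ``for which, to our knowledge, the results do not follow from Theorem~\ref{th:Invariants}.'' Your opening sentence --- that the shortest proof is simply to cite \cite{Bag92}, \cite{BC88}, \cite{Dre89} --- is therefore exactly what the paper does, and everything you write afterwards goes beyond it.

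Your argument for (c) is in fact complete and correct, and it is worth pointing out that it derives the result \emph{entirely} from Theorem~\ref{th:Invariants}(d) (the Sandling quotient) together with the invariance of $|G|$: since $|G:Z(G)|=p^2$ forces class~$2$ with $|G'|=p$, one has $\gamma_2(G)^p\gamma_3(G)=1$ and hence the Sandling quotient is $G$ itself. This actually runs counter to the paper's framing, which places (c) among the cases \emph{not} covered by Theorem~\ref{th:Invariants}; so here you have noticed something the authors apparently did not.

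Your treatments of (a) and (b), by contrast, remain outlines: you identify the classification and the reduction strategy but defer the actual separating arguments to the cited sources --- which, again, is all the paper does. One inaccuracy in (a): it is not true that \emph{none} of the invariants in Theorem~\ref{th:Invariants} distinguishes $D_{2^n}$, $SD_{2^n}$, $Q_{2^n}$. For instance $Q_{2^n}$ has a unique elementary abelian subgroup (its centre $C_2$), while the other two contain copies of $C_2\times C_2$, so the Quillen invariant~(h) already isolates $Q_{2^n}$; and for small $n$ the number of conjugacy classes of Klein four-subgroups further separates $D_{2^n}$ from $SD_{2^n}$. The substantive content of \cite{Bag92} lies in handling whatever survives such bookkeeping, and your sketch does not attempt that --- but neither does the paper.
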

Moreover, MIP holds for certain $3$-groups of maximal class, but the conditions are too technical to be included here \cite{BK19}.

\section{The small group ring and Bagi\'nski's invariant}\label{sec:Bag}

A group-theoretical invariant which is very useful to us to eliminate some candidates to counterexamples which would have a very high Jennings bound 
is given without proof by Bagi\'nski in the last lines of \cite{Bag99}. He mentions that it should follow similar 
to Sandling's proof in \cite{San89}, but it is not evident how to achieve this. We give a proof of Bagi\'nski's observation using the algebra constructed by Sandling, but in the way it is described in \cite[Section 2.3]{HS06}. Bagi\'nski wrote us however that 
his proof is different from ours.

We first collect some standard results which will be useful to us.
\begin{lemma}\label{lemma_commids} Let $G$ be a group, $a, b, c \in G$ and $n$ a positive integer.
\begin{enumerate}
\item[(i)] The following identities hold \cite[Chapter III, 1.2 Hilfssatz]{Hup67}:
\begin{itemize}
\item $[a, b]^{-1} = [b, a]$.
\item $[a, bc] = [a,c][a,b]^c$.
\item $[ab,c] = [a,c]^b[b,c]$.
\end{itemize}
\item[(ii)] If $[a,b]$ commutes with $a$ and $b$ then $(ab)^n = a^n b^n [b,a]^{\binom{n}{2}}$ \cite[Chapter III, 1.3	 Hilfssatz]{Hup67}.
\end{enumerate}
\end{lemma}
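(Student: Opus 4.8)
The plan is to prove the identities in part~(i) by directly substituting the definitions $[g,h]=g^{-1}h^{-1}gh$ and $g^h=h^{-1}gh$ and cancelling, and then to obtain part~(ii) by induction on $n$.

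For~(i): the first identity is immediate, since $[a,b]^{-1}=(a^{-1}b^{-1}ab)^{-1}=b^{-1}a^{-1}ba=[b,a]$. For the second, I would expand the right-hand side $[a,c]\,[a,b]^c=(a^{-1}c^{-1}ac)\,(c^{-1}a^{-1}b^{-1}ab\,c)$ and cancel the middle block $ac\cdot c^{-1}a^{-1}$, which leaves $a^{-1}c^{-1}b^{-1}abc=a^{-1}(bc)^{-1}a(bc)=[a,bc]$. The third is handled the same way: $[a,c]^b\,[b,c]=(b^{-1}a^{-1}c^{-1}ac\,b)(b^{-1}c^{-1}bc)$, and cancelling $b\cdot b^{-1}$ and then $c\cdot c^{-1}$ gives $b^{-1}a^{-1}c^{-1}abc=(ab)^{-1}c^{-1}(ab)c=[ab,c]$. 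No hypotheses are used; these hold in every group.

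For~(ii): set $z=[b,a]$. By assumption $z$ commutes with $a$ and with $b$, hence with every element of $\langle a,b\rangle$. I would induct on $n$, with $n=1$ trivial since $\binom{1}{2}=0$. Assuming $(ab)^n=a^n b^n z^{\binom{n}{2}}$, first note $b^a=a^{-1}ba=b\,(b^{-1}a^{-1}ba)=bz$, so by centrality of $z$ we get $(b^n)^a=(bz)^n=b^n z^n$, i.e. $b^n a=a\,b^n z^n$. Then
\[
(ab)^{n+1}=\bigl(a^n b^n z^{\binom{n}{2}}\bigr)(ab)=a^n(b^n a)\,b\,z^{\binom{n}{2}}=a^n\bigl(a\,b^n z^n\bigr)b\,z^{\binom{n}{2}}=a^{n+1}b^{n+1}z^{\,n+\binom{n}{2}},
\]
and since $n+\binom{n}{2}=\binom{n+1}{2}$ this equals $a^{n+1}b^{n+1}[b,a]^{\binom{n+1}{2}}$, completing the induction.

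I do not expect a genuine obstacle here, as each step is a short rearrangement of group words; the only points needing care are correctly accumulating the exponent of $[b,a]$ across the induction and invoking the centrality hypothesis exactly where $a$ is commuted past $b^n$. (Alternatively, (ii) can be deduced from the identities in (i), but the direct induction above is the shortest route.)
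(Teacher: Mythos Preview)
Your proof is correct in every detail: the three identities in (i) are verified by direct expansion and cancellation, and the induction in (ii) is clean, with the centrality of $[b,a]$ used exactly where you commute $a$ past $b^n$ and the exponent bookkeeping $n+\binom{n}{2}=\binom{n+1}{2}$ is right. The paper itself does not supply a proof of this lemma at all---it simply cites Huppert \cite[Chapter III, 1.2 and 1.3 Hilfssatz]{Hup67}---so there is no in-paper argument to compare against; your write-up is essentially the standard textbook verification one would find behind those citations.
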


\begin{proposition}\label{prop_smallgroupring} \cite{San89}
Let $k$ be the field with $p$ elements. Denote by $I$ the two sided ideal in $kG$ generated by elements of the form $(g-1)(n-1)$ where $g \in G$ and $n \in G'$. Let $g_1, \ldots, g_m$ be elements in $G$ whose 
images in $G/G'$ form an independent generating set of the abelian group $G/G'$. Let $S$ be the group of normalized units in $kG/I$. By abuse of notation denote the image of an element $u \in kG$ in $S$ also 
as $u$. Then:
\begin{itemize}
\item[(i)] The ideal $I$ does not depend on the choice of a group basis in $kG$. The isomorphism type of $kG/I$ does only depend on the isomorphism type of $G/\Phi(G')$. 
\item[(ii)] Assume from now on that $\Phi(G') = 1$. There is a natural short exact sequence
$$1 \rightarrow G' \rightarrow S \rightarrow \mathrm{V}(kG/G') \rightarrow 1. $$
Moreover $G$ is a normal subgroup in $S$.
\item[(iii)] Let $A$ be the subgroup of $S$ generated by elements of the form $1 + (g_1-1)^{k_1}(g_2-1)^{k_2}\ldots (g_m-1)^{k_m}$ where $k_1, \ldots, k_n$ are non-negative integers such that 
$k_1 + \ldots + k_m \geq 2$. Then $S = GA$ and the action of $a = 1 + (g_1-1)^{k_1}(g_2-1)^{k_2} \ldots (g_m-1)^{k_m} \in A$ on $G$ is given by
$$[g,a] = [g,g_1,g_1, \ldots ,g_1,g_2, \ldots ,g_2,g_3, \ldots g_m] $$ where in the above expression each $g_i$ appears exactly $k_i$ times.
\item[(iv)] Assume additionally that $\gamma_4(G) = 1$. Then $A$ is abelian and $S = G \rtimes A$. Moreover an independent generating set of $A$ is given by the elements 
$1 + (g_1-1)^{k_1}(g_2-1)^{k_2}\ldots (g_m-1)^{k_m}$ where $k_1 + \ldots + k_m \geq 2$, not all the $k_1,\ldots, k_m$ are divisible by $p$ and $k_i$ is smaller than the order 
of $g_i$ for each $1 \leq i \leq m$. The order of $1 + (g_1-1)^{k_1}(g_2-1)^{k_2}\ldots (g_m-1)^{k_m}$ is the smallest number $t$ such that $(g_i-1)^{k_it} = 0$ for some $1 \leq i \leq m$ for which $k_i \neq 0$.
\end{itemize}
\end{proposition}

The ring  $kG/I$ in the previous proposition is also known as the \emph{small group ring}.

\begin{proof}
These facts follow from the results in \cite{San89} and are explicitly given, though not in this compressed form, in \cite[Section 2.3, pp.15-16]{HS06}.
\end{proof}


We are now in the position to give a proof of Bagi\'nski's observation.

\begin{proof}[Proof of Proposition \ref{prop:BaginskiInv}]
Let $I$ be the two-sided ideal of $kG$ generated by the elements of the form 
$(g-1)(n-1)$ for $g \in G$ and $n \in G'$. Let $S$ be the normalized unit group of $kG/I$. As from now on all our calculations will take place in $kG/I$ we will denote the image of an element $u$ of $kG$ 
in $kG/I$ simply as $u$.


The isomorphism type of $kG/I$ depends only on the isomorphism type of $G/\Phi(G')$, so we can assume that $\Phi(G')= 1$, i.e. $G'$ is elementary abelian. From the facts given in 
Proposition~\ref{prop_smallgroupring} we easily derive that $\gamma_i(G) = \gamma_i(S)$ for any $i \geq 2$ where we understand $\gamma_i(G)$ as a subgroup of $S$, cf. also \cite[Theorem 1.3]{SS95}. 
So $\gamma_4(G)$ is a subgroup in $S$ not depending on the choice of the group base in $kG$. Hence we can also assume $\gamma_4(G) = 1$ by considering $S/\gamma_4(S)$. If even $\gamma_3(G) = 1$ holds, 
then we are done by Theorem~\ref{th:Invariants}, so we can assume $\gamma_3(G) \neq 1$.

Denote the generators of $G$ as $c$ and $d$ and assume that the order of $c$ is greater than or equal to the order of $d$. 
So by Proposition~\ref{prop_smallgroupring} we can write $S$ as a semidirect product $G \rtimes A$ where 
$A$ is an abelian group and a set of independent generators of $A$ is given by elements of the form $1+(c-1)^i(d-1)^j$ where $i,j \in \mathbb{Z}_{\geq 0}$, $i+j \geq 2$ and not both $i$ and $j$ are divisible 
by $p$. Furthermore the only generators of $A$ of the form $1+(c-1)^i(d-1)^j$ which are potentially not commuting with $G$ are $1+(c-1)(d-1)$, $1+(c-1)^2$ and $1+(d-1)^2$. For $p=2$ this is even only 
$1+(c-1)(d-1)$. So we have the structure of $S$ given quite explicitly. The proposition will now follow from the following claim. Note that all invariants of $H$ appearing in the claim are known invariants 
of the group bases of the modular group algebras of finite $p$-groups, cf. Theorem~\ref{th:Invariants}.

\textbf{Claim:} If $H$ is a normal subgroup of $S$ generated by two elements such that $\gamma_3(H) \neq 1$, $|H| = |G|$, $Z(H) \cong Z(G)$ and $Z(H)/(Z(H) \cap H') \cong Z(G)/(Z(G) \cap G')$, 
then $H$ is isomorphic to $G$.\\
\textit{Proof:} Let $H$ be a subgroup in $S$ with the claimed properties such that $H = \langle ga, hb \rangle$ with $g,h \in G$ and $a,b \in A$. Note that $\gamma_4(S) =1$, so also $\gamma_4(H) =1$. As $G$ 
is $2$-generated note that $G'/\gamma_3(G) = \langle [c,d] \rangle$ \cite[III, Hilfssatz 1.11]{Hup67}. Moreover $A$ centralizes $G'$. 
Using the commutator identities from Lemma~\ref{lemma_commids} we obtain
$$[ga,hb] = [g,hb]^a[a,hb] = [g,b][g,h]^b[a,b][a,h]^b = [g,b][a,h][g,h].$$
As $\gamma_3(H) \neq 1$ this implies $[g,h] \equiv [c,d]^i \mod \gamma_3(G)$ for some $1\leq i\leq p-1$ and also $G' = H'$, where the last equality means equality of sets. We next show that
$[c, d^p] = 1$ if $p$ is odd and $[c, d^2] = [c,d,d]$ if $p=2$. This also follows from the identities in Lemma~\ref{lemma_commids} as we then get $[c, d^p] = [c,d]^p[c,d,d][c,d^2,d] \ldots [c,d^{p-1},d]$. 
From $[c,d^i,d] = [c,d,d]^i$ (using $\gamma_4(G) = 1$) it then follows $[c, d^p] = [c,d,d]^{\binom{p}{2}}$. This in turn implies that $g$ and $h$ are not in $G^p$ and must be generators of $G$. Furthermore 
$(ga)^p = g^pa^p[g,a]^{\binom{p}{2}}$ by Lemma~\ref{lemma_commids}, as $[g,a]$ is central in $S$. Note that if $p = 2$ and $[g,a] \neq 1$ then the order of $g$ is not $2$. Indeed, otherwise the group generated 
by $g$ and $G'$ consists only of involutions as $g^h = g[g,h]$ is also of order $2$. Hence $\circ(ga) = \text{max}\{\circ(a), \circ(g)\}$. As $|G| = |H|$ and $Z(G) \cong Z(H)$ we obtain that 
$\circ(a) \leq \circ(g)$.

Note that $[g,h]$ is not a $p$-th power by the above, as otherwise it would be central. Hence we can write $G$ in terms of generators and relations in the following way where we skip obvious relation such 
as $[g,h] = g^{-1}h^{-1}gh$.
\begin{align*}
G &= \langle g,h,[g,h],[g,h,g],[g,h,h] \ | \\
& g^{\circ(g)}, h^{\circ(h)}, [g,h]^p, [g,h,g]^{\alpha_g}, [g,h,h]^{\alpha_h}, g^{\circ(g)/p}[g,h,g]^{\beta_g}[g,h,h]^{\beta_h}, h^{\circ(h)/p}[g,h,g]^{\gamma_g}[g,h,h]^{\gamma_h} \rangle ,
\end{align*}
where $\alpha_g, \alpha_h \in \{1,p\}$, but not both are $1$, and $\beta_g, \beta_h, \gamma_g, \gamma_h \in \{0,1, \ldots, p-1\}$. From the calculations in the preceding paragraph it follows that 
   $(ga)^{\circ(g)} = (hb)^{\circ(h)} = [ga,hb]^p = [ga,hb,ga]^{\alpha_g} = [ga,hb,hb]^{\alpha_h} = 1$. Assume that $(ga)^{\circ(g)/p}[ga,hb,ga]^{\beta_g}[ga,hb,hb]^{\beta_h} \neq 1$. Then by the above it 
   must equal $a^{\circ(a)/p}$, where $\circ(a) \neq p$. But then $a^{\circ(a)/p} \in H$ is a central element such that we would have $Z(H) \not\cong Z(G)$. Hence the generators of $H$ satisfy exactly the 
   same relations as the generators of $G$ and the groups are indeed isomorphic.	
\end{proof}

We will also use Proposition~\ref{prop_smallgroupring} for groups which are not $2$-generated in Section~\ref{sec:56}.

\section{Quotients of the augmentation ideal and the algorithm of Eick}\label{sec:Algorithm}

A first algorithmic approach to MIP was developed by Roggenkamp and Scott \cite{RS93} and improved and implemented by Wursthorn \cite{Wur90, Wur93}. It is based on the idea to compare $I(kG)/I(kG)^t$ 
with $I(kH)/I(kH)^t$ and if there exists a $t$ such that $I(kG)/I(kG)^t$ and $I(kH)/I(kH)^t$ are not isomorphic, then also $kG$ and $kH$ are not isomorphic. Wursthorn's implementation appears not to be accessible 
anymore nowadays. A new algorithm which iteratively computes a canonical form for $I(kG)/I(kG)^t$ was developed by Eick \cite{Eic08} and implemented in the \texttt{GAP}-package \texttt{ModIsom} \cite{ModIsom}. 

As mentioned in the introduction, our calculations are based on computational improvements to the programs provided in \texttt{ModIsom}. Most crucially the implementation of Eick, applied to a pair of 
groups $G$ and $H$, first computes the presentation of $I(kG)$ and $I(kH)$ and then proceeds to compare the isomorphism types of $I(kG)/I(kG)^t$ and $I(kH)/I(kH)^t$ for increasing $t$. For groups of order $3^7$ 
or $5^6$ the computation of the presentations of $I(kG)$ and $I(kH)$ is very time and memory consuming and we instead calculate only the presentation of $I(kG)/I(kG)^s$ and $I(kH)/I(kH)^s$ for some initial 
value $s$. This value can be increased if needed during the calculations. A natural starting value for $s$ is the Jennings bound of $G$ and $H$. We elaborate on the 
theoretical background which allows an efficient computation of a presentation for $I(kG)/I(kG)^s$. 

Most studies of the MIP are based on the work of Jennings who described an explicit basis for the augmentation ideal of the modular group algebra of a $p$-group \cite{Jen41}. Note that $kG \cong kH$ if 
and only if $I(kG) \cong I(kH)$. For a finite $p$-group $G$ the Jennings-Zassenhaus series is a series of subgroups $G = D_1(G) \geq D_2(G) \ldots \geq D_n(G) = 1$ defined as 
$$D_m(G) = \prod_{ip^j \geq m} \gamma_i(G)^{p^j}$$
where $i \geq 1$ and $j \geq 0$. Note that subsequent quotients in this series are elementary abelian and hence can be viewed as $k$-vector spaces. This series coincides with the series of dimension 
subgroups of $G$ with respect to $kG$, i.e.
$$D_m(G) = G \cap (1+I(kG)^m). $$
If $D_i(G)/D_{i+1}(G)$ has dimension $s$, let $g_{i,j}$ be elements of $G$ such that $\{g_{i,j}D_{i+1}(G) \ | \ 1 \leq j \leq s \}$ forms a basis of $D_i(G)/D_{i+1}(G)$. Note that $s=0$ is possible, 
as $D_i(G)/D_{i+1}(G)$ might be the trivial group. For an element $g \in G$ set $\bar{g} = g- 1 \in kG$ and define the \emph{weight} of $\bar{g}$ as the maximal integer $w$ such that $g \in D_w(G)$. 
Define the weight of an element of shape $\overline{g_1} \ \overline{g_2} \ldots\overline{g_m}$ as the sum of the weights of $\overline{g_j}$ for $1 \leq j \leq m$. Then a basis of $I(kG)^m/I(kG)^{m+1}$ 
is given by elements of type $\prod (\overline{g_{i,j}})^{\alpha_{i,j}}$ of weight $m$ where $i$ and $j$ run over all admissible values and $0 \leq \alpha_{i,j} \leq p-1$. In particular a basis of $I(kG)$ 
is given by all elements of form $\prod (\overline{g_{i,j}})^{\alpha_{i,j}}$ and we call it a \emph{Jennings basis}. The weight of an arbitrary element $x \in I(kG)$ is defined as the minimal weight of an 
element of the Jennings basis which appears in the linear combination of $x$ when expressed in the Jennings basis. See \cite[Chapters 11, 12]{DdSMS99} for more details on dimensions subgroups and the
Jennings-Zassenhaus series.

The Jennings basis allows one to easily compute a presentation for $I(kG)$ in terms of generators and relations. Note that to compute the product of two elements of the Jennings basis the only needed 
knowledge is how elements of the form $\overline{g_{i,j}}$ multiply with each other. For elements $g,h \in G$ we have
\begin{equation}\label{eq:BasicEqInkG}
\overline{gh} = gh-1 = (g-1)(h-1)+(g-1)+(h-1) = \bar{g}\bar{h} + \bar{g} + \bar{h}. 
\end{equation}
Hence if $\bar{g}$ and $\bar{h}$ appear as elements of a Jennings basis such that $\bar{g}\bar{h}$ is not contained in the Jennings basis, i.e. $\bar{g}$ appears later in the ordered basis than $\bar{h}$, then 
\begin{align*}
\bar{g}\bar{h} &= \overline{gh} - \bar{g} - \bar{h} = \overline{hg[g,h]} - \bar{g} - \bar{h} = \bar{h} + \bar{g} + \overline{[g,h]} + \bar{h}\bar{g} + \bar{h}\overline{[g,h]} + \bar{g}\overline{[g,h]} + \bar{h}\bar{g}\overline{[g,h]} - \bar{g} - \bar{h} \\
&= \bar{h}\bar{g} + (1+\bar{h} + \bar{g} + \bar{h}\bar{g})\overline{[g,h]}.
\end{align*}
So to express $\bar{g}\bar{h}$ as a linear combination of Jennings basis elements it remains to express $\overline{[g,h]}$ as such a linear combination. Now if $g \in D_i$ and $h \in D_j$, 
then $[g,h] \in D_{i+j}$ and hence $\overline{[g,h]}$ has bigger weight than $\bar{g}$ and $\bar{h}$. Thus we can continue the process for $\overline{[g,h]}$ until we reach a trivial commutator.

A special situation appears if we want to compute $\bar{g}^p$. Assume $g^p = g_1^{\alpha_1} \ldots g_m^{\alpha_m}$ for certain elements $g_1,\ldots,g_m$ such that $\overline{g_i}$ is an element of 
the Jennings basis and $1 \leq \alpha_i \leq p-1$ for every $1 \leq i \leq m$, then using $\bar{g}^p = \overline{g^p}$ we obtain from \eqref{eq:BasicEqInkG} that
$$\bar{g}^p = \sum_{\substack{M \subseteq \{1,\ldots,m\} \\ M \neq \emptyset}} \prod_{j \in M} \overline{g_j}^{\alpha_j}. $$
So the information needed to effectively multiply in the Jennings basis is given by the expression in the Jennings basis of the commutators $\overline{[g,h]}$ and the $p$-th powers $\bar{g}^p$.

If we are only interested in a presentation of $I(kG)/I(kG)^s$ for a certain integer $s$, then we can drop the summands of $\overline{[g,h]}$ and $\bar{g}^p$ of weight at least $s$, so that the 
multiplication becomes much more efficient. In most cases for two given $p$-groups $G$ and $H$ of the same order the smallest number $r$ satisfying $I(kG)/I(kG)^r \not\cong I(kH)/I(kH)^r$ is much 
smaller than the smallest number $t$ such that $I(kG)^t = 0$. So to solve MIP for a given pair of groups one does not need to start with a multiplication table of $I(kG)$ but can start with a much 
smaller quotient. 

We give a sketch of our algorithm.

\begin{algorithm}[H]%
 \KwData{List $L$ of groups or group id's, integers $s$, $t$, $m$ for starting, step and maximal level}
 \KwResult{An integer $r$ such that $I(kG)/I(kG)^r \not\cong I(kH)/I(kH)^r$ for any $G,H \in L$ or \texttt{false}}
 LT := [ ];\\
 \For{G in L}{Add to LT table containing the basic information and multiplication for $I(kG)/I(kG)^s$}
 $a$ := $s$	\\ 
 \For{$i=1,\ldots,m$}{
   \If{$i > a$}{$a := a+t$ \\ Recompute LT to contain information for $I(kG)/I(kG)^{a}$}
   \For{G in L}{Compute canonical form of $I(kG)/I(kG)^i$ from the information in LT using \texttt{ModIsom}}
   \If{All canonical forms are different}{$r$ := $i$ \\ \texttt{break}} 
 }

 \texttt{return} If defined, $r$ s.t. $I(kG)/I(kG)^r \not\cong I(kH)/I(kH)^r$ for any $G,H \in L$, otherwise \texttt{false}
 \caption{MIPBinSplit}
\end{algorithm}


\section{A question of Bleher, Kimmerle, Roggenkamp \& Wursthorn}\label{sec:JenningsBound}
We recall the question of Bleher, Kimmerle, Roggenkamp \& Wursthorn from Section \ref{sec:ques}. In fact, the original question from \cite{BKRW99} contains two parts and we give a negative answer to the 
first part of the question.

\begin{question*}
 (Bleher-Kimmerle-Roggenkamp-Wursthorn) Let $G$ and $H$ be non-isomorphic $p$-groups [of the same order] such that $D_{n+1}(G) = 1$. Does $I(kG)/I(kG)^{2n+1} \not \cong I(kH)/I(kH)^{2n+1}$ always hold? 
 Does it hold if $G/D_n(G) \cong H/D_n(H)$ and $D_n(G)$ is cyclic?
\end{question*}

The condition that $G$ and $H$ are of the same order is not stated in \cite{BKRW99}, but probably assumed. It is needed 
as can be seen with easy examples such as $G$ and $H$ being cyclic groups of order $p$ and $p^2$ respectively for an odd prime.
In \cite{BKRW99} it is also stated that this bound is not reached by groups of order $2^6$, contradicting our 
calculations\footnote{There are 8 pairs of groups which reach this bound, namely \texttt{ [15, 16], [46, 47], [46, 48], [46, 49], [47, 48], [106, 107],  [161, 162], [164, 165]}.}. However, in regard to 
pairs of groups which share the known group-theoretical invariants this is true. 

We are able to give a negative answer to the first part of Question~\ref{que:BKRW}.

\begin{proposition}\label{prop:AnswerBKRW}
 Let $G = SG(2^8,i)$ and $H=SG(2^8,j)$ where 
 \[(i,j) \in \{(1272, 1273 ), (1274, 1275), (1778, 1781), (1779, 1780)  \}.\]
  Then $D_5(G) = 1$ and $G/D_4(G) \cong H/D_4(H)$ while
\[I(kG)/I(kG)^9 \cong I(kH)/I(kH)^9 \ \ \text{and} \ \ I(kG)/I(kG)^{10} \not\cong I(kH)/I(kH)^{10} .\]
\end{proposition}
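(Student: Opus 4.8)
The statement concerns only finitely many explicit groups, so my plan is a finite verification that splits into a purely group-theoretic part and a part about the modular group algebras.

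For the group-theoretic part I would first record the eight groups $SG(2^8,i)$, $i \in \{1272,1273,1274,1275,1778,1779,1780,1781\}$, say by power-commutator presentations obtained from the SmallGroups Library. For each such $G$ one computes the lower central series $\gamma_i(G)$ and then the Jennings--Zassenhaus series via $D_m(G) = \prod_{ip^j \ge m}\gamma_i(G)^{p^j}$; for $p=2$ this is simply the appropriate product of the subgroups $\gamma_i(G)^{2^j}$. A direct check gives $D_5(G) = 1$ for all eight groups, so in the notation of Question~\ref{que:BKRW} we are in the case $n = 4$, $2n+1 = 9$. One then forms $G/D_4(G)$ and $H/D_4(H)$ for each of the four pairs and verifies that they are isomorphic (for instance by comparing the SmallGroups identifiers of these quotients). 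Since $D_5(G) = D_5(H) = 1$ and $G \not\cong H$, the Jennings bound of each pair is exactly $4$; by the remark in Section~\ref{sec:JenningsBound} this only guarantees $I(kG)/I(kG)^4 \cong I(kH)/I(kH)^4$ for free, so the isomorphism up to level $9$ asserted in the proposition is the genuinely new — and somewhat surprising — point.

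For the algebra statements I would run the algorithm of Section~\ref{sec:Algorithm} (\texttt{MIPBinSplit}) with $k = \FF_2$. Concretely, for each of the eight groups one builds the structure-constant table of $I(kG)/I(kG)^{10}$ from a Jennings basis, using \eqref{eq:BasicEqInkG} together with the expansions of the commutators $\overline{[g,h]}$ and the $p$-th powers $\bar{g}^p$ with all summands of weight $\ge 10$ discarded, and then computes, for $t = 1,\dots,10$, the canonical form of $I(kG)/I(kG)^t$ produced by Eick's algorithm. One observes that for each of the four pairs these canonical forms agree for every $t \le 9$ and differ at $t = 10$. Since the canonical form is a complete isomorphism invariant of a nilpotent algebra, this yields $I(kG)/I(kG)^9 \cong I(kH)/I(kH)^9$ and $I(kG)/I(kG)^{10} \not\cong I(kH)/I(kH)^{10}$; in particular $kG \not\cong kH$, consistent with $G \not\cong H$, so these pairs are not counterexamples to the MIP but only to the bound proposed in the first part of Question~\ref{que:BKRW}.

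The main obstacle is not conceptual but practical: no cleverness is needed beyond what is already in Eick's algorithm, and the difficulty is making the computation of $I(kG)/I(kG)^{10}$ for eight groups of order $2^8$ feasible in time and memory, which is precisely what the truncated-multiplication technique of Section~\ref{sec:Algorithm} enables. The correctness of the conclusion rests on the correctness of Eick's canonical-form algorithm, which we take as given; the one point deserving a little care is that the equality of canonical forms at level $9$ and their inequality at level $10$ are reproducible, which one confirms by rerunning the computation in the \texttt{GAP}-package \texttt{ModIsomExt} \cite{ModIsomExt}.
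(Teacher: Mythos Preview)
Your proposal is correct and matches the paper's approach: the proposition is established by direct computer verification, computing the Jennings--Zassenhaus series in \texttt{GAP} for the group-theoretic claims and running Eick's canonical-form algorithm (via \texttt{ModIsomExt}) on $I(kG)/I(kG)^t$ for $t\le 10$ to obtain the isomorphism and non-isomorphism statements. The paper gives no separate written proof beyond this computational framework, so there is nothing further to compare.
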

For all the groups $G$ appearing in Proposition~\ref{prop:AnswerBKRW} the last non-trivial dimension subgroup $D_4(G)$ is elementary abelian of rank $2$. So these groups do not provide a negative answer 
to the second part of Question~\ref{que:BKRW}. We also did not encounter any other groups in our calculations which would give such an answer. 

In \cite{BKRW99} only $2$-groups are considered, while we work mostly with odd primes. For all groups of order $3^6$, $3^7$ and $5^6$ the bound of Question~\ref{que:BKRW} is not reached. In the notation 
of Question~\ref{que:BKRW} one always has $I(kG)/I(kG)^{2n} \not \cong I(kH)/I(kH)^{2n}$ in all our observed examples. Even a stronger bound holds in our examples, though admittedly we did not study primes 
bigger than $5$ systematically.

\begin{question}\label{que:Ours}
Let $p$ be an odd prime and $G$ and $H$ $p$-groups of the same order. Let $s$ be the Jennings bound of $G$ and $H$ and set $m = \max\left\{s, \ 2s-\frac{p-1}{2} \right\}$. 
Does $I(kG)/I(kG)^{m+1} \not \cong I(kH)/I(kH)^{m+1}$ always hold?
\end{question} 

The bound in Question~\ref{que:Ours} is reached by both $3$-groups and $5$-groups we studied, but not violated by any of them, cf. Section~\ref{sec:SmallOrder} for more details. Proposition~\ref{prop:AnswerBKRW} 
suggests to also include other data than just the prime and the Jennings bound in questions of the type as \ref{que:BKRW} and \ref{que:Ours}. A new systematic study of groups of order $2^9$ might be a start to 
obtain new ideas in this direction. 

The Jennings bound was also useful to detect an error in \texttt{ModIsom}. Namely for $G = SG(3^6,14)$ and $H=SG(3^6,15)$ we have $G/D_4(G) \cong H/D_4(H)$ while \texttt{ModIsom} 
returned $kG/I(kG)^4 \not \cong kH/I(kH)^4$ when using the function \texttt{CheckBin}.

\section{Groups of small order}\label{sec:SmallOrder}

In this section we describe the computational results we achieved with our improvements of Eick's \texttt{ModIsom}. We include an overview of groups of all sizes for which a proof was given with 
the aid of computers. Note that for $2^6$ a proof was given which used the computer only to compute group-theoretical invariants \cite{HS06}. We call a set of groups a \emph{bin}, if the groups 
share all the group-theoretical invariants listed in Theorem~\ref{th:Invariants}.

\begin{table}[h]
\begin{tabular}{c|c|c|c|c}
Size of group & \# All groups & \# Bins &  Maximal size of bin & \# Groups in bins  \\ \hline
$2^6$ & 267 & 8 & 3 & 17 \\
$2^7$ & 2328 & 158 & 5 & 343 \\
$2^8$ & 56092 & 4406 & 14 & 11476 \\ \hline
$3^6$ & 504 & 18 & 2 & 36 \\
$3^7$ & 9310 & 444 & 21 & 1236 \\ \hline
$5^6$ & 684 & 82 & 19 & 343 \\
\end{tabular}\caption{Basic parameters of our calculations}\label{tb:Numbers}
\end{table}

To see how useful the newly included group-theoretical invariants described in Theorem~\ref{th:Invariants} are, one can compare the numbers in Table~\ref{tb:Numbers} with the numbers in \cite[p. 3908]{Eic08}. 
The results of our calculations are also included as data in the package \cite{ModIsomExt}.

\subsection{Groups of orders studied before}

For groups of order $2^6$ we recover the results of Wursthorn \cite{Wur93} which were recalculated in \cite{HS06}. For groups of order $2^7$ we recover the result of \cite{BKRW99}. 
For groups of order $3^6$ and $2^8$ we recover the proof of the MIP for these groups from the first usage of Eick's algorithm \cite{Eic08}. With regard to the Jennings bound we find no counterexamples to Questions~\ref{que:BKRW} or \ref{que:Ours} in the class of groups of order $2^6$, $3^6$ or $2^7$. 
Both bounds are sharp for certain groups of these orders, namely for order $3^6$ the bound is reached by the pair $[97,98]$ while for $2^7$ it is reached by the pairs 
$[555,556]$, $[886,891]$, $[887,893]$, $[1644,1645]$, $[1683,1684]$, $[1815,1816]$ and $[1830,1831]$,  

The groups of order $2^8$ for which the bound of Question~\ref{que:BKRW} is violated are listed in Proposition~\ref{prop:AnswerBKRW}.

There are some differences compared to the observations in \cite{Eic08}. Although we have to deal with way fewer groups of order $3^6$ compared to \cite{Eic08}, we still can not 
confirm the claim that among the groups sharing all group-theoretical invariants the pair $(SG(3^6, 63), SG(3^6,64))$ is the only one not satisfying $kG/I(kG)^{10} \not\cong kH/I(kH)^{10}$. 
E.g. for $G = SG(3^6, 97)$ and $H = SG(3^6, 98)$ we have $I(kG)/I(kG)^{17} \cong I(kH)/I(kH)^{17}$ (the Jennings bound of $G$ and $H$ is 9).

Unfortunately we lack the computational possibilities to reevaluate the MIP for groups of order $2^9$ which has been studied in \cite{EK11}. This remains to be done.

\subsection{Groups of order $3^7$}
For groups of order $3^7$  we obtain a full result showing that the MIP has a positive answer for all groups of this order. Bagi\'nski's observation proved in Section~\ref{sec:Bag} turns 
out to be very useful as it allows to avoid the application of the algorithm to groups with Jennings bound 81.

There is no pair of groups violating the bound of Question~\ref{que:Ours} while there are four bins of groups in which each pair of groups reaches it, namely \texttt{[243, 244]}, 
\texttt{[294, 295, 296, 297, 298, 299]}, \texttt{[358, 359]} and \texttt{[5846, 5847]}.

\subsection{Groups of order $5^6$}\label{sec:56}
Bagi\'nski's observation proved in Section~\ref{sec:Bag} turns out to be very useful as it allows to avoid the application of the algorithm to groups with Jennings bound 125. Moreover, 
for some groups which are not $2$-generated the description of the unit group of the small group ring given in Section~\ref{sec:Bag} still turns out to be useful. Namely 
if $\gamma_2(G)^p\gamma_4(G) = 1$ and $kG \cong kH$, then the unit group $S$ of the small group ring of $kG$ must contain a normal subgroup isomorphic to $H$. Having the explicit description 
of $S$ from Proposition~\ref{prop_smallgroupring} one can use \texttt{GAP} to construct $S$ and look for a copy of $H$ in $S$. The bins which took a long time to finish in our main algorithm, 
but could be handled by this application of Proposition~\ref{prop_smallgroupring} are \texttt{[48, 49], [162, 163], [168, 169]} and \texttt{[550, 551]}. A pair of groups $G$ and $H$ which 
satisfy $\gamma_2(G)^p\gamma_4(G) = 1$ and $\gamma_2(H)^p\gamma_4(H) = 1$, but which have 
isomorphic unit groups of small group rings are \texttt{[553, 554]}\footnote{The isomorphism was detected using \texttt{RandomIsomorphismTest}.}. In fact our calculations were not performed 
for the whole group $S$ but for $S/Z(S)\cap A$ which also contains any group base.

No groups for which we could answer MIP violate the bound expressed in Question~\ref{que:Ours} while there are four bins for which each pair of groups reaches this bound, namely
\texttt{[577,578,579]}, \texttt{[581, 582, 583]}, \texttt{[675, 676, 677, 678, 679]} and \texttt{[681, 682, 683, 684]}.

\begin{remark} An argument which is sometimes used to distinguish $kG$ and $kH$ is to count the number of elements in $kG$ which satisfy a certain condition which is defined independently 
of $G$. This has been done in \cite{Pas65} or for certain groups in \cite{Wur93}, cf. also \cite[Section 2.7]{HS06}. Two maps are typically used:
$$\varphi_{n,m,\ell}: I(kG)^n/I(kG)^{n+m} \rightarrow I(kG)^{np^\ell}/I(kG)^{np^\ell+m}, \ x \mapsto x^{p^\ell} $$
and 
$$\psi_n: Z(kG) \cap I(kG)^n \rightarrow Z(kG), \ x \mapsto x^p $$
and the number of elements mapping to $0$ under these maps are compared. For the open cases of groups of order $5^6$ we did not find integers $n,m,\ell$ which give 
different numbers of elements under $\varphi_{n,m,\ell}$ and moreover all non-trivial class sums in all the groups become $0$ when taken to the $5$-th power, i.e. $\psi_n$ can not be used.
\end{remark}

\textbf{Acknowledgments:} We are very grateful to Bettina Eick for her idea for this collaboration and many useful conversations.

\bibliographystyle{amsalpha}
\bibliography{MIP}

\end{document}